\newcommand{\C}{\ensuremath{{\mathbb{C}}}}
\newcommand{\F}{\ensuremath{{\mathbb{F}}}}
\newcommand{\Q}{\ensuremath{{\mathbb{Q}}}}
\newcommand{\Z}{\ensuremath{{\mathbb{Z}}}}
\newcommand{\lv}{\ensuremath{\left\vert}}
\newcommand{\rv}{\ensuremath{\right\vert}}
\newcommand{\lp}{\ensuremath{\left(}}
\newcommand{\rp}{\ensuremath{\right)}}
\newcommand{\lb}{\ensuremath{\left\{}}
\newcommand{\rb}{\ensuremath{\right\}}}
\DeclareMathOperator{\Surj}{Surj}
\DeclareMathOperator{\Aut}{Aut}
\DeclareMathOperator{\sub}{sub}
\DeclareMathOperator{\rank}{rank}
\theoremstyle{plain}
\newtheorem{clheur}{Heuristic}[section]
\newtheorem{subg4}[clheur]{Lemma}
\newtheorem{subg5}[clheur]{Lemma}
\newtheorem{converge3}[clheur]{Corollary}
\newtheorem{converge4}[clheur]{Corollary}
\newtheorem{q1}[clheur]{Corollary}
\newtheorem{qme}[clheur]{Corollary}
\newtheorem{autid}[clheur]{Lemma}
\newtheorem{ell2}[clheur]{Lemma}
\newtheorem{Sagain}[clheur]{Corollary}
\theoremstyle{definition}
\newtheorem{Not23}[clheur]{Notation}
\newtheorem{Not24}[clheur]{Definition}
\newtheorem{Not4}[clheur]{Definitions}
\newtheorem{Not6}[clheur]{Definition}
\newtheorem{Not7}[clheur]{Notation}
\newtheorem{Not71}[clheur]{Definitions}
\title{Some finite abelian group theory and some $q$-series identities}
\author{Derek Garton}
\date{May 22, 2014}
\begin{document}
\maketitle

\section{Introduction} \label{introduction}

In 1984, Cohen and Lenstra~\cite{CL} famously presented a family of conjectures about the structure of ideal class groups of number fields.
These conjectures follow from this heuristic:
\begin{clheur} \label{clheur}
For any odd prime $\ell$, a finite abelian $\ell$-group should appear as the $\ell$-Sylow subgroup of the ideal class group of an imaginary quadratic extension of $\Q$ with frequency inversely proportional to the order of its automorphism group.
\end{clheur}
Their heuristic leads to a  probability distribution on the poset of isomorphism classes of finite abelian $\ell$-groups.
They computed various averages on this poset using this distribution, averages that lead directly to their conjectures.
Along the way, they proved a variety of identities concerning finite abelian $\ell$-groups, such as the identity of Hall~\cite{Hall}, which shows that their heuristic does indeed give a probability distribution:
\[
\sum_{\substack{\text{finite abelian}\\
\ell\text{-groups }A}}{\frac{1}{\lv\Aut{A}\rv}}
=\prod_{i=1}^\infty{\lp1-\ell^{-i}\rp^{-1}}.
\]

More recently, Delaunay and Delaunay-Jouhet (\cite{D},\cite{DJ},\cite{DJ2}) computed new averages using this distribution.
They proceeded to use these averages to study the structure of Tate-Shafarevich groups and Selmer groups of elliptic curves.
Moreover, they showed that the averages they computed are consistent with the conjectures of Poonen-Rains (\cite{PR}).
Like Cohen and Lenstra, they discovered new and interesting identities along the way to computing these averages.

In this paper, we continue the tradition of studying finite abelian group identities to investigate number theoretic phenomena.
The simplest identity is \hyperref[subg4]{Lemma~\ref*{subg4}}: for any finite abelian $\ell$-group $A$ of $\ell$-rank $r$,
\[
\sum_{\substack{\text{finite abelian}\\
\ell\text{-groups }B\\
\rank_\ell{B}=r}}
{\frac{\lv\lb C\leq B\mid C\simeq A\rb\rv}{\lv\Aut{B}\rv}}
=\frac{1}{\lv\Aut{A}\rv}\cdot
\prod_{i=1}^r{\lp1-\ell^{-i}\rp^{-1}},
\]
but see also \hyperref[converge3]{Corollary~\ref*{converge3}}, \hyperref[ell2]{Lemma~\ref*{ell2}}, and \hyperref[Sagain]{Corollary~\ref*{Sagain}}.
These identities are crucial in constructing a random matrix justification for Malle's conjectural distribution of relative ideal class groups of number fields in the presence of unexpected roots of unity (see~\cite{Mal0} and~\cite{Mal}).
See~\cite{G} for this random matrix construction.

The organization of this paper is as follows.
\hyperref[notdef]{Section~\ref*{notdef}} contains notation and definitions, Sections~\ref{prelim} and~\ref{prelim2} present the averages of finite abelian $\ell$-groups, and \hyperref[qprelim]{Section~\ref*{qprelim}} presents two $q$-series identities which are important for later computations in this paper.

\section{Notation and definitions} \label{notdef}

Let $\ell$ be an odd prime and let $\mathcal{G}$ be the poset of isomorphism classes of finite abelian $\ell$-groups, with the relation $\left[A\right]\leq\left[B\right]$ if and only if there exists an injection $A\hookrightarrow B$.
(For notational simplicity, we will conflate finite abelian $\ell$-groups and the equivalence classes containing them.)
If $i\in\Z^{>0}$, let
\[
\rank_{\ell^i}{A}:=\dim_{\F_\ell}{\lp\ell^{i-1}A/\ell^iA\rp}.
\]
We will abbreviate $\rank_\ell{A}$ by $\rank{A}$.

\begin{Not6} \label{Not6}
For any $r_1,\ldots,r_i\in\Z^{\geq0}$, define the following subposet of $\mathcal{G}$:
\[
\mathcal{G}\lp r_1,\ldots,r_i\rp
:=\lb A\in\mathcal{G}\mid\rank_{\ell^j}(A)=r_j\text{ for all }1\leq j\leq i\rb.
\]
\end{Not6}

Next, we define a function $S:\mathcal{G}\times\mathcal{G}\to\Z$ that is related to the M\"{o}bius function on $\mathcal{G}$.
See~\cite{GMob} for a deeper discussion of this relationship.

\begin{Not4} \label{Not4}
For any $A,B\in\mathcal{G}$, let $\sub{(A,B)}$ be the number of subgroups of $B$ that are isomorphic to $A$.
If $A\in\mathcal{G}$, an \emph{$A$-chain} is a finite (possibly empty) linearly ordered subset of $\lb B\in\mathcal{G}\mid B>A\rb$.
Now, given an $A$-chain $\mathfrak{C}=\lb A_j\rb_{j=1}^i$, define
\[
\sub{\lp\mathfrak{C}\rp}
:=(-1)^i\sub{\lp A,A_1\rp}\prod_{j=1}^{i-1}{\sub{\lp A_j,A_{j+1}\rp}}.
\]
(We set $\sub{\lp\mathfrak{C}\rp}=1$ if $\mathfrak{C}$ is empty.)
Finally, for any $A,B\in\mathcal{G}$, let
\[
S(A,B)
=\begin{cases}
\hspace{18px}0&\text{if }A\nleq B,\\
\hspace{18px}1&\text{if }A=B,\\
\displaystyle{
\sum_{\substack{A\text{-chains }\mathfrak{C},\\
\max{\mathfrak{C}}=B}}
{\sub{\lp\mathfrak{C}\rp}}}&\text{if } A<B.
\end{cases}
\]
\end{Not4}

The definitions that follow are standard in the study of $q$-series.

\begin{Not7} \label{Not7}
For $q,z\in\C$ with $\lv q\rv<1$ and any $i\in\Z^{\geq0}$, let
\[
\lp z;q\rp_i:=\prod_{j=0}^{i-1}{\lp1-q^jz\rp}.
\]
To ease notation, set $(q)_i:=\lp q;q\rp_i$.
Next, we define the $q$-binomial coefficients: for any $k,m\in\Z^{\geq0}$, let
\[
\binom{k}{m}_q:=\frac{(q)_k}{(q)_m(q)_{k-m}},
\]
with $\binom{k}{m}_q:=0$ if $k<m$.
\end{Not7}

In Sections~\ref{prelim} and~\ref{prelim2}, we will set $q=1/\ell$, but the results in \hyperref[qprelim]{Section~\ref*{qprelim}} hold for any complex number $q$ of modulus less than 1.

\section{Some identities from finite abelian group theory} \label{prelim}

Before proving the first identity, we will follow~\cite{CL} and~\cite{Hall} in making two more definitions, which we will use only in the proof of \hyperref[subg4]{Lemma~\ref*{subg4}}.
In this section, we set $q=1/\ell$.

\begin{Not24} \label{Not24}
For any $A\in\mathcal{G}$ and $i\in\Z^{\geq0}$, define the \emph{$i$-weight of $A$} to be
\[
w_i(A):=\frac{\lv\Surj{\lp\Z^i,A\rp}\rv}{\lv A\rv^i\lv\Aut{A}\rv},
\]
and for any $j\in\Z^{\geq0}$, define
\[
w_i\lp\ell^j\rp:=\sum_{\substack{A\in\mathcal{G}\\
\lv A\rv=\ell^j}}{w_i(A)}.
\]
\end{Not24}

\begin{subg4} \label{subg4}
Suppose that $A\in\mathcal{G}$ and $\rank{A}=r$.
Then
\[
\sum_{B\in\mathcal{G}(r)}
{\frac{\sub{(A,B)}}{\lv\Aut{B}\rv}}
=\frac{1}{(q)_r\lv\Aut{A}\rv}.
\]
\end{subg4}
\begin{proof}
By Proposition~3.1 of~\cite{CL}, we know
\[
\sum_{B\in\mathcal{G}(r)}
{\frac{\sub{(A,B)}}{\lv\Aut{B}\rv}}
=\sum_{B\in\mathcal{G}(r)}
{\frac{w_r(B)}{(q)_r}\sub{(A,B)}}.
\]
Next, we will simplify this summation by making two complementary observations concerning any $B\in\mathcal{G}$.
First, if $\rank{B}>r$, then $w_r(B)=0$ by the definition of the $r$-weight of $G$.
Second, if $\rank{B}<r$, then $\sub{(A,B)}=0$.
Thus, our sum becomes
\begin{align*}
\sum_{B\in\mathcal{G}(r)}
{\frac{\sub{(A,B)}}{\lv\Aut{B}\rv}}
&=\frac{1}{(q)_r}
\cdot\sum_{B\in\mathcal{G}}
{w_r(B)\sub{(A,B)}}\\
&=\frac{1}{(q)_r}
\cdot\sum_{i=0}^{\infty}
{\sum_{\substack{
B\in\mathcal{G}\\
\lv B\rv=\ell^i}}
{w_r(B)\sub{(A,B)}}}.
\end{align*}
By Proposition~4.1 of~\cite{CL}, if $\ell^i\geq\lv A\rv$, then
\[
\sum_{\substack{
B\in\mathcal{G}\\
\lv B\rv=\ell^i}}
{w_r(B)\sub{(A,B)}}
=w_r\lp\ell^i\lv A\rv^{-1}\rp w_r(A).
\]
If, on the other hand, $\ell^i<\lv A\rv$, then the above sum clearly vanishes.
Thus,
\[
\sum_{B\in\mathcal{G}(r)}
{\frac{\sub{(A,B)}}{\lv\Aut{B}\rv}}
=\frac{1}{(q)_r}
\cdot w_r(A)
\cdot\sum_{i=0}^{\infty}{w_r\lp\ell^i\rp}.
\]
Finally, we apply Propsition~3.1 (again) and Corollary~3.7 of~\cite{CL} to obtain
\[
\sum_{B\in\mathcal{G}(r)}
{\frac{\sub{(A,B)}}{\lv\Aut{B}\rv}}
=\frac{1}{(q)_r}
\cdot\frac{(q)_r}{\lv\Aut{A}\rv}
\cdot\frac{1}{(q)_r},
\]
as desired.
\end{proof}

Before our next result, we need a few more definitions.

\begin{Not71} \label{Not71}
For any $A\in\mathcal{G}$ and any $A$-chain $\mathfrak{C}=\lb A_j\rb_{j=1}^i$, define
\[
\sub_+{\lp\mathfrak{C}\rp}
:=\sub{\lp A,A_1\rp}\prod_{j=1}^{i-1}{\sub{\lp A_j,A_{j+1}\rp}}.
\]
If $A,B\in\mathcal{G}$, let
\[
S_+(A,B)
:=\begin{cases}
\hspace{18px}0&\text{if }A\nleq B,\\
\hspace{18px}1&\text{if }A=B,\\
\displaystyle{
\sum_{\substack{A\text{-chains }\mathfrak{C},\\
\max{\mathfrak{C}}=B}}
{\sub_+{\lp\mathfrak{C}\rp}}}&\text{if } A<B.
\end{cases}
\]
Finally, suppose $A\in\mathcal{G}(r)$ for some $r\in\Z^{\geq0}$.
Then for any $i\in\Z^{\geq0}$, let
\[
\varsigma_i(A)
:=\sum_{\substack{
A\text{-chains }\mathfrak{C}\\
\max{\mathfrak{C}}\in\mathcal{G}(r)\\
\lv\mathfrak{C}\rv=i}}
{\frac{\sub_+{\lp\mathfrak{C}\rp}}
{\lv\Aut{\lp\max{\mathfrak{C}}\rp}\rv}}.
\]
We know that $\varsigma_i(A)$ converges by the following lemma.
\end{Not71}

\begin{subg5} \label{subg5}
If $A\in\mathcal{G}(r)$ for some $r\in\Z^{\geq0}$, and $i\in\Z^{\geq0}$, then
\[
\varsigma_i(A)=\frac{\lp(q)_r^{\phantom{l}-1}-1\rp^i}{\lv\Aut{A}\rv}.
\]
\end{subg5}
\begin{proof}
We will induct on $i$.
Note that the case $i=0$ is trivially true and the case $i=1$ is true by \hyperref[subg4]{Lemma~\ref*{subg4}}.
Using the induction hypothesis and \hyperref[subg4]{Lemma~\ref*{subg4}}, we see that for any $i\geq1$,
\begin{align*}
\varsigma_i(A)
&=\sum_{\substack{
A\text{-chains }\mathfrak{C}\\
\max{\mathfrak{C}}\in\mathcal{G}(r)\\
\lv\mathfrak{C}\rv=i}}
{\frac{\sub_+{\lp\mathfrak{C}\rp}}
{\lv\Aut{\lp\max{\mathfrak{C}}\rp}\rv}}\\
&=\sum_{\substack{
A\text{-chains }\mathfrak{C}\\
\max{\mathfrak{C}}\in\mathcal{G}(r)\\
\lv\mathfrak{C}\rv=i-1}}
{\sub_+{\lp\mathfrak{C}\rp}
\cdot\sum_{\substack{
B\in\mathcal{G}(r)\\
B\neq\max{\mathfrak{C}}}}
{\frac{\sub{\lp\max{\mathfrak{C}},B\rp}}{\lv\Aut{B}\rv}}}\\
&=\sum_{\substack{
A\text{-chains }\mathfrak{C}\\
\max{\mathfrak{C}}\in\mathcal{G}(r)\\
\lv\mathfrak{C}\rv=i-1}}
{\sub_+{\lp\mathfrak{C}\rp}
\cdot\frac{(q)_r^{\phantom{l}-1}-1}{\lv\Aut{\lp\max{\mathfrak{C}}\rp}\rv}}\\
&=\frac{\lp(q)_r^{\phantom{l}-1}-1\rp^i}
{\lv\Aut{A}\rv},
\end{align*}
as desired.
\end{proof}

\begin{converge3} \label{converge3}
If $A\in\mathcal{G}(r)$ for some $r\in\Z^{\geq0}$, then
\[
\sum_{B\in\mathcal{G}(r)}
{\frac{S(A,B)}{\lv\Aut{B}\rv}}
=\frac{(q)_r}{\lv\Aut{A}\rv}.
\]
\end{converge3}
\begin{proof}
Since $\ell$ is odd, we know that $0<(q)_r^{\phantom{l}-1}-1<1$.
Moreover,
\[
\sum_{B\in\mathcal{G}(r)}
{\frac{S(A,B)}{\lv\Aut{B}\rv}}
=\sum_{i=0}^{\infty}{(-1)^i\varsigma_i(A)}.
\]
\end{proof}

\begin{converge4} \label{converge4}
f $A\in\mathcal{G}(r)$ for some $r\in\Z^{\geq0}$, then
\[
\sum_{B\in\mathcal{G}(r)}
{\frac{S_+(A,B)}{\lv\Aut{B}\rv}}
=\frac{(q)_r}{\lp2(q)_r-1\rp\lv\Aut{A}\rv}.
\]
\end{converge4}

\section{Two $q$-series identites} \label{qprelim}

We record here two $q$-series identities that we will need in \hyperref[prelim2]{Section~\ref*{prelim2}}.
The first follows from the $q$-binomial theorem, and the second follows from the $q$-Chu-Vandermonde sum.
In this section, we allow $q$ to be any complex number of modulus less than 1.

\begin{q1} \label{q1}
Suppose that $k\in\Z^{>0}$.
Then
\[
\sum_{i=0}^k{(-1)^i\binom{k}{i}_qq^{\frac{i(i+1)}{2}-ik}}=0.
\]
\end{q1}
\begin{proof}
We use the following corollary of the $q$-binomial theorem: for any $z\in\C$,
\[
\sum_{i=0}^k{(-1)^i\binom{k}{i}_qq^{\frac{i(i-1)}{2}}z^i}=(z;q)_k.
\]
This is given as Corollary~10.2.2 (part~$(c)$) in~\cite{AAR} (though, as the authors mention, it is due to Rothe).
We substitute $x=-q^{-k+1}$ to obtain the result.
\end{proof}

\begin{qme} \label{qme}
Suppose that $m,k\in\Z^{\geq0}$.
Then
\[
\sum_{i=0}^{\infty}{\binom{m}{i}_q\binom{k}{i}_q\lp q\rp_iq^{(m-i)(k-i)}}=1.
\]
\end{qme}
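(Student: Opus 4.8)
The plan is to recognize the sum as an instance of the $q$-Chu-Vandermonde identity, exactly as the section's introduction advertises. First I would recall the standard $q$-Chu-Vandermonde sum in one of its usual forms, say
\[
\sum_{i=0}^{n}{\binom{n}{i}_q\frac{(a;q)_i}{(b;q)_i}\,b^i\,q^{\binom{i}{2}}}
=\frac{(b/a;q)_n}{(b;q)_n}a^n,
\]
or equivalently one of the terminating ${}_2\phi_1$ evaluations at $q^{-n}$ (as in Corollary~10.2.4 of~\cite{AAR}). The right-hand side of our target is simply $1$, so the goal is to massage the left-hand side of our sum into a $q$-Chu-Vandermonde form whose closed product telescopes to $1$.

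The key computational step is to rewrite each factor in our summand in terms of Pochhammer symbols so that the sum matches a known ${}_2\phi_1$. I would use the standard conversions
\[
\binom{m}{i}_q=\frac{(q)_m}{(q)_i(q)_{m-i}},\qquad
\binom{k}{i}_q=\frac{(q)_k}{(q)_i(q)_{k-i}},
\]
together with the reflection identity $(q)_{n-i}=(-1)^i q^{\binom{i}{2}-ni}(q)_n/(q^{-n};q)_i$ (valid formally, and rigorously once one interprets $(q^{-n};q)_i$ correctly), to turn the factors $(q)_{m-i}^{-1}$ and $(q)_{k-i}^{-1}$ into $(q^{-m};q)_i$ and $(q^{-k};q)_i$ times explicit powers of $q$. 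The $(q)_i$ factor and the $q^{(m-i)(k-i)}$ factor then absorb these $q$-powers, and after collecting exponents the sum should take the shape $\sum_i \binom{k}{i}_q (q^{-m};q)_i (\cdots)^i q^{\binom{i}{2}}$, i.e. a terminating $q$-Chu-Vandermonde sum in the variable that ranges to $k$ (note the sum is effectively finite since $\binom{k}{i}_q=0$ for $i>k$). Evaluating by the identity above gives a product that I expect to collapse to $1$.

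The main obstacle is bookkeeping: correctly tracking the powers of $q$ produced by the two reflection identities and by the $q^{(m-i)(k-i)}$ term so that they cancel against the $q^{\binom{i}{2}}$ weight in the $q$-Chu-Vandermonde form, and verifying that the resulting closed product is exactly $1$ rather than some residual power of $q$ or ratio of Pochhammer symbols. A subtler point is the symmetry between $m$ and $k$: the summand is manifestly symmetric in $m$ and $k$, so I would use this as a consistency check, and I would also verify the small cases $k=0$ (where only $i=0$ survives and the sum is $q^{0}=1$) and $m=k=1$ by hand to catch any sign or exponent error before trusting the general manipulation. An alternative, if the reflection approach proves error-prone, is to fix $m$ and induct on $k$ using the $q$-Pascal recurrence $\binom{k}{i}_q=\binom{k-1}{i}_q+q^{k-i}\binom{k-1}{i-1}_q$, reducing the identity for $k$ to the identity for $k-1$; but I expect the direct $q$-Chu-Vandermonde route to be cleaner once the exponents are aligned.
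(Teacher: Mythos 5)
Your proposal follows essentially the same route as the paper: rewrite the summand via the reflection identity so the sum becomes $q^{mk}\cdot{}_2\phi_1\lp q^{-m},q^{-k};0;q,q\rp$, then evaluate by the $q$-Chu--Vandermonde sum (the paper uses the specialization with third parameter $c=0$, for which the closed form is just $a^n=q^{-mk}$, so the product does collapse to $1$ as you anticipate). The approach is correct; only the bookkeeping you defer remains, and it works out exactly as the paper's one-line computation shows.
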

\begin{proof}
Using the standard notation for hypergeometric functions, note that
\[
\sum_{i=0}^{\infty}{\binom{m}{i}_q\binom{k}{i}_q\lp q\rp_iq^{(m-i)(k-i)}}
=q^{mk}\cdot{}_2\phi_1\lp q^{-m},q^{-k};0;q,q\rp.
\]
The result then follows from the $q$-Chu-Vandermonde sum, which states that
\[
{}_2\phi_1\lp a,q^{-n};c;q,q\rp
=\frac{\lp\frac{c}{a};q\rp_n}{\lp c;q\rp_n}a^n;
\]
see Formula~II.6 from Appendix~II in~\cite{GR}.
\end{proof}

\section{More identities from finite abelian group theory} \label{prelim2}

Before continuing, we need one more bit of notation and a lemma that follows from work in~\cite{GMob}.
In this section, we once again set $q=1/\ell$.

\begin{Not23} \label{Not23}
For any $A\in\mathcal{G}$ and any $i\in\Z^{\geq0}$, let
\[
A_{\oplus i}
:=A\oplus\overbrace{\lp\Z/\ell\rp\oplus\cdots\oplus\lp\Z/\ell\rp}^{i\text{ times}}.
\]
\end{Not23}

\begin{autid} \label{autid}
If $s,i\in\Z^{\geq0}$ and $A\in\mathcal{G}(s,s)$, then
\[
\lv\Aut{A_{\bigoplus i}}\rv=q^{-i(2s+i)}(q)_i\lv\Aut{A}\rv.
\]
\end{autid}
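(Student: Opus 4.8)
The plan is to translate the hypothesis $A\in\mathcal{G}(s,s)$ into concrete structural information and then reduce the automorphism count to a Frattini-quotient computation. Writing $A\simeq\bigoplus_{k=1}^s\Z/\ell^{e_k}$, the conditions $\rank_\ell A=\rank_{\ell^2}A=s$ say precisely that $A$ has exactly $s$ cyclic summands, each of exponent $e_k\geq2$. Setting $E:=(\Z/\ell)^i$ so that $A_{\oplus i}=A\oplus E$, I would describe an arbitrary endomorphism $\phi$ of $A\oplus E$ by its block decomposition $\phi=\bigl(\begin{smallmatrix}\alpha&\beta\\\gamma&\delta\end{smallmatrix}\bigr)$, where the four data $\alpha\in\operatorname{End}(A)$, $\delta\in\operatorname{End}(E)$, $\beta\in\operatorname{Hom}(E,A)$, and $\gamma\in\operatorname{Hom}(A,E)$ vary independently and determine $\phi$.

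The core of the argument is that $\phi$ is an automorphism if and only if its reduction modulo the Frattini subgroup $\ell(A\oplus E)=\ell A$ is invertible on $(A/\ell A)\oplus E\cong\F_\ell^{s+i}$, since a surjective endomorphism of a finite $\ell$-group is an automorphism and surjectivity is detected on the Frattini quotient. I would then compute this reduced $(s+i)\times(s+i)$ matrix block by block. The decisive observation is that $\bar\beta=0$: any homomorphism $E\to A$ has image in $A[\ell]$, and since every $e_k\geq2$ we have $A[\ell]\subseteq\ell A$, so $\beta$ reduces to zero in $A/\ell A$. Hence the reduced matrix is block lower-triangular, with diagonal blocks $\bar\alpha$ (the reduction of $\alpha$) and $\delta$, and is therefore invertible exactly when $\bar\alpha\in GL_s(\F_\ell)$ and $\delta\in GL_i(\F_\ell)$. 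Applying the same Frattini criterion to $A$ alone gives $\bar\alpha$ invertible $\iff\alpha\in\Aut(A)$, while $\delta$ invertible means $\delta\in\Aut(E)$; crucially, no condition is imposed on $\beta$ or $\gamma$.

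It follows that the automorphisms of $A\oplus E$ are exactly the quadruples with $\alpha\in\Aut(A)$, $\delta\in\Aut(E)$, and $\beta,\gamma$ arbitrary, so that
\[
\lv\Aut(A_{\oplus i})\rv=\lv\Aut(A)\rv\cdot\lv\Aut(E)\rv\cdot\lv\operatorname{Hom}(E,A)\rv\cdot\lv\operatorname{Hom}(A,E)\rv.
\]
To finish I would evaluate the last three factors: $\lv\operatorname{Hom}(E,A)\rv=\lv A[\ell]\rv^i=\ell^{si}$ and $\lv\operatorname{Hom}(A,E)\rv=\lv\operatorname{Hom}(A/\ell A,E)\rv=\ell^{si}$, while $\lv\Aut(E)\rv=\lv GL_i(\F_\ell)\rv=\ell^{i(i-1)/2}\prod_{m=1}^i(\ell^m-1)=q^{-i^2}(q)_i$ after substituting $q=1/\ell$. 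Multiplying gives $q^{-2si}\cdot q^{-i^2}(q)_i=q^{-i(2s+i)}(q)_i$ times $\lv\Aut(A)\rv$, as claimed. The one genuinely load-bearing step is the vanishing $\bar\beta=0$, which is exactly where the hypothesis $A\in\mathcal{G}(s,s)$ enters: were any $e_k$ equal to $1$, the reduced matrix would fail to be triangular and the clean product formula would break down. Everything else is bookkeeping together with the standard order formula for $GL_i(\F_\ell)$.
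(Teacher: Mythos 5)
Your argument is correct, and it is genuinely different in character from the paper's proof, which consists of a single line deferring to the general closed-form formula for the order of the automorphism group of a finite abelian $\ell$-group of prescribed type (Theorem 2.2 of the cited companion paper) and then performing the resulting computation. You instead give a self-contained structural proof: decompose $\operatorname{End}(A\oplus E)$ into blocks, reduce modulo the Frattini subgroup, and observe that the hypothesis $A\in\mathcal{G}(s,s)$ (every cyclic summand of $A$ has exponent at least $2$) forces $A[\ell]\subseteq\ell A$, so the $\operatorname{Hom}(E,A)$ block dies in the Frattini quotient and the reduced matrix is block triangular. This cleanly isolates why the count factors as $\lv\Aut{A}\rv\cdot\lv GL_i(\F_\ell)\rv\cdot\lv\operatorname{Hom}(E,A)\rv\cdot\lv\operatorname{Hom}(A,E)\rv$, and all three auxiliary factors are evaluated correctly ($\ell^{si}$, $\ell^{si}$, and $q^{-i^2}(q)_i$), giving $q^{-i(2s+i)}(q)_i\lv\Aut{A}\rv$ as required. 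What the paper's route buys is brevity and uniformity (the general formula handles arbitrary types at once); what yours buys is a proof readable without the external reference, plus a transparent explanation of the role of the hypothesis $A\in\mathcal{G}(s,s)$, which the citation-based proof leaves implicit. Your identification of $\bar\beta=0$ as the load-bearing step is exactly right: if some summand of $A$ had exponent $1$, the reduction would not be triangular and the formula would indeed fail.
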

\begin{proof}
Computation following from Theorem 2.2 in~\cite{GMob}.
\end{proof}

\begin{ell2} \label{ell2}
Suppose $r,s\in\Z^{\geq0}$ and $r\geq s$.
If $A\in\mathcal{G}(r,s)$ and $i\in\lb s,\ldots,r\rb$, then
\[
\sum_{B\in\mathcal{G}(r,i)}
{\frac{\sub{(A,B)}}{\lv\Aut{B}\rv}}
=\binom{r-s}{r-i}_q\cdot\frac{q^{i(i-s)}}
{(q)_i\lv\Aut{A}\rv}.
\]
\end{ell2}
\begin{proof}
We begin by considering the $i=r$ case, which we will prove by induction on $r-s$.
(When $r-s=0$, the $i=r$ case is clearly true by \hyperref[subg4]{Lemma~\ref*{subg4}}.)
Now, there exists a unique $A_0\in\mathcal{G}(s,s)$ such that $A=\lp A_0\rp_{\oplus(r-s)}$.
Next, by \hyperref[subg4]{Lemma~\ref*{subg4}}, \hyperref[autid]{Lemma~\ref*{autid}} (twice), Lemma~2.5 of~\cite{GMob}, and the induction hypothesis, we see that
\begin{align*}
\frac{1}{(q)_r\lv\Aut{A}\rv}
&=\sum_{B\in\mathcal{G}(r)}
{\frac{\sub{(A,B)}}{\lv\Aut{B}\rv}}\\
&=\sum_{j=s}^{r}{\sum_{B\in\mathcal{G}(r,j)}
{\frac{\sub{(A,B)}}{\lv\Aut{B}\rv}}}\\
&=\sum_{j=s}^{r}{\sum_{B\in\mathcal{G}(j,j)}
{\frac{q^{(r-j)(r+j)}\sub{\lp\lp A_0\rp_{\bigoplus(j-s)},B\rp}}
{(q)_{r-j}\lv\Aut{B}\rv}}}\\
&=\sum_{j=s}^{r-1}
{\frac{q^{(r-j)(r+j)}\cdot q^{j(j-s)}}
{(q)_{r-j}\cdot(q)_j\lv\Aut{\lp A_0\rp_{\bigoplus(j-s)}}\rv}}
+\sum_{B\in\mathcal{G}(r,r)}
{\frac{\sub{(A,B)}}{\lv\Aut{B}\rv}}\\
&=\sum_{j=s}^{r-1}
{\frac{q^{(r-j)(r+j)}\cdot q^{j(j-s)}\cdot q^{(j-s)(j+s)}(q)_{r-s}}
{(q)_{r-j}
\cdot(q)_j
\cdot q^{(r-s)(r+s)}(q)_{j-s}\lv\Aut{A}\rv}}
+\sum_{B\in\mathcal{G}(r,r)}
{\frac{\sub{(A,B)}}{\lv\Aut{B}\rv}}\\
&=\frac{1}{\lv\Aut{A}\rv}
\cdot\sum_{j=s}^{r-1}
{\binom{r-s}{r-j}_q
\cdot\frac{q^{j(j-s)}}{(q)_j}}
+\sum_{B\in\mathcal{G}(r,r)}
{\frac{\sub{(A,B)}}{\lv\Aut{B}\rv}}
\end{align*}
Letting $m=r-s$, we obtain
\begin{align*}
\frac{1}{(q)_r\lv\Aut{A}\rv}
&=\frac{1}{\lv\Aut{A}\rv}
\cdot\sum_{j=0}^{m-1}
{\binom{m}{m-j}_q
\cdot\frac{q^{j(r+j-m)}}{(q)_{r+j-m}}}
+\sum_{B\in\mathcal{G}(r,r)}
{\frac{\sub{(A,B)}}{\lv\Aut{B}\rv}}.
\end{align*}
Thus,
\begin{align*}
\sum_{B\in\mathcal{G}(r,r)}
{\frac{\sub{(A,B)}}{\lv\Aut{B}\rv}}
&=\frac{1}{(q)_r\lv\Aut{A}\rv}
\cdot\lp1-\sum_{j=0}^{m-1}
{\binom{m}{m-j}_q
\cdot\frac{q^{j(r+j-m)}(q)_r}{(q)_{r+j-m}}}\rp&&\\
&=\frac{1}{(q)_r\lv\Aut{A}\rv}
\cdot\lp1-\sum_{j=0}^{m-1}
{\binom{m}{j}_q\binom{r}{m-j}_q
\cdot(q)_{m-j}q^{j(r+j-m)}}\rp&&\\
&=\frac{1}{(q)_r\lv\Aut{A}\rv}
\cdot\lp1-\sum_{j=1}^{m}
{\binom{m}{j}_q\binom{r}{j}_q\cdot(q)_jq^{(m-j)(r-j)}}\rp&&\\
&=\frac{1}{(q)_r\lv\Aut{A}\rv}
\cdot\lp1-\lp1-q^{mr}\rp\rp&&
\hspace{-80px}\text{(by \hyperref[qme]{Corollary~\ref*{qme}})}\\
&=\frac{q^{r(r-s)}}{(q)_r\lv\Aut{A}\rv},
\end{align*}
concluding the $i=r$ case.

For the general case, the same techniques as above show that
\begin{align*}
\sum_{B\in\mathcal{G}(r,i)}
{\frac{\sub{(A,B)}}{\lv\Aut{A}\rv}}
&=\sum_{B\in\mathcal{G}(i,i)}
{\frac{q^{(r-i)(r+i)}\sub{\lp\lp A_0\rp_{\bigoplus(i-s)},B\rp}}
{(q)_{r-i}\lv\Aut{B}\rv}}\\
&=\frac{q^{(r-i)(r+i)}\cdot q^{i(i-s)}}
{(q)_{r-i}\cdot(q)_i\lv\Aut{\lp A_0\rp_{\bigoplus(i-s)}}\rv}\\
&=\frac{q^{(r-i)(r+i)}\cdot q^{i(i-s)}\cdot q^{(i-s)(i+s)}(q)_{r-s}}
{(q)_{r-i}\cdot(q)_i\cdot q^{(r-s)(r+s)}(q)_{i-s}\lv\Aut{A}\rv},
\end{align*}
completing the proof.
\end{proof}

\begin{Sagain} \label{Sagain}
Suppose $r,s\in\Z^{\geq0}$ and $r\geq s$.
If $A\in\mathcal{G}(r,s)$ and $i\in\lb s,\ldots,r\rb$, then
\[
\sum_{B\in\mathcal{G}(r,i)}
{\frac{S(A,B)}{\lv\Aut{B}\rv}}
=(-1)^{i-s}q^{\frac{i(i+1)}{2}-\frac{s(s+1)}{2}}
\cdot\binom{r-s}{r-i}_q
\cdot\frac{(q)_s}{\lv\Aut{A}\rv}.
\]
\end{Sagain}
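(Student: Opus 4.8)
The plan is to obtain \hyperref[Sagain]{Corollary~\ref*{Sagain}} from \hyperref[ell2]{Lemma~\ref*{ell2}} in exactly the way \hyperref[converge3]{Corollary~\ref*{converge3}} is obtained from \hyperref[subg4]{Lemma~\ref*{subg4}}: by exploiting the fact that $S$ is the convolution inverse of $\sub$ (this is the precise sense in which $S$ is related to the M\"{o}bius function of $\mathcal{G}$; see~\cite{GMob}). Peeling the maximal element off each $A$-chain in \hyperref[Not4]{Definition~\ref*{Not4}} shows directly that, for all $A\leq C$,
\[
\sum_{A\leq B\leq C}{S(A,B)\sub(B,C)}=\delta_{A,C}.
\]
I will feed the known $\sub$-sums of \hyperref[ell2]{Lemma~\ref*{ell2}} into this identity to produce the desired $S$-sums.

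Fix $A\in\mathcal{G}(r,s)$ and abbreviate $\sigma_k:=\sum_{B\in\mathcal{G}(r,k)}S(A,B)/\lv\Aut{B}\rv$. For each $j$ with $s\leq j\leq r$, I would divide the inversion identity above by $\lv\Aut{C}\rv$ and sum over $C\in\mathcal{G}(r,j)$. Since $\rank$ is monotone under injection (if $C\hookrightarrow D$ then $\rank{C}\leq\rank{D}$, and likewise $\rank_{\ell^2}$ is monotone among groups of fixed $\ell$-rank, as it counts cyclic parts of order $\geq\ell^2$), every $B$ contributing to the resulting double sum lies in some $\mathcal{G}(r,k)$ with $s\leq k\leq j$; in particular all sums are finite and supported on the stated strata. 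Evaluating the inner sum $\sum_{C\in\mathcal{G}(r,j)}\sub(B,C)/\lv\Aut{C}\rv$ by \hyperref[ell2]{Lemma~\ref*{ell2}} (applied to $B\in\mathcal{G}(r,k)$) gives the triangular linear system
\[
\sum_{k=s}^{j}{\binom{r-k}{r-j}_q\frac{q^{j(j-k)}}{(q)_j}\,\sigma_k}=\frac{\delta_{j,s}}{\lv\Aut{A}\rv},\qquad s\leq j\leq r.
\]
Its diagonal coefficients equal $1/(q)_j\neq0$, so the system is lower triangular with invertible diagonal and hence determines the $\sigma_k$ uniquely; it therefore suffices to verify that the formula claimed in \hyperref[Sagain]{Corollary~\ref*{Sagain}} solves it. The $j=s$ equation immediately forces $\sigma_s=(q)_s/\lv\Aut{A}\rv$, matching the claim.

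Substituting the claimed value of $\sigma_k$ into the $j$-th equation and applying the ``subset of a subset'' identity $\binom{r-s}{r-k}_q\binom{r-k}{r-j}_q=\binom{r-s}{r-j}_q\binom{j-s}{j-k}_q$ lets me factor out $\binom{r-s}{r-j}_q$; after reindexing by $i=j-k$ and setting $m=j-s$, the two quadratic $q$-exponents collapse (the $ji$ cross term cancels), leaving total exponent $\tfrac{j(j+1)}{2}-\tfrac{s(s+1)}{2}+\tfrac{i(i-1)}{2}$. For $j>s$ the claim then reduces, after cancelling the nonzero common factors, to
\[
\sum_{i=0}^{m}{(-1)^i\binom{m}{i}_q q^{\frac{i(i-1)}{2}}}=(1;q)_m=0,
\]
which is the $z=1$ specialization of the Rothe identity $\sum_{i=0}^{k}(-1)^i\binom{k}{i}_qq^{i(i-1)/2}z^i=(z;q)_k$ used to prove \hyperref[q1]{Corollary~\ref*{q1}} (the $j=0$ factor of $(1;q)_m$ vanishes since $m\geq1$). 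The main obstacle is precisely this final reduction: marshalling the product of two $q$-binomial coefficients together with the two quadratic $q$-exponents so that everything telescopes into one application of Rothe's identity. By comparison, setting up the triangular system and checking the rank monotonicity that controls its support are routine.
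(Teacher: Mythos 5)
Your proposal is correct, and it reaches the paper's destination through a genuinely different organization of the same ingredients. The paper works directly with the chain-sum definition of $S$: it peels the maximal element off each $A$-chain inside $X_i=\sum_{B\in\mathcal{G}(r,i)}S(A,B)/\lv\Aut{B}\rv$, applies Lemma~\ref{ell2}, and solves for $X_i$ to get the recursion $X_i=-\sum_{j=s}^{i-1}q^{i(i-j)}\binom{r-j}{r-i}_qX_j$ with $X_s=(q)_s/\lv\Aut{A}\rv$, which it then closes by induction using Corollary~\ref{q1}. You instead isolate the chain-peeling into the clean convolution identity $\sum_{A\leq B\leq C}S(A,B)\sub(B,C)=\delta_{A,C}$, pair it against $1/\lv\Aut{C}\rv$ summed over $C\in\mathcal{G}(r,j)$, and read off a lower-triangular system whose $j$-th equation, after multiplying by $(q)_j$, is \emph{exactly} the paper's recursion; you then verify the closed form rather than inducting. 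The two closing computations are also the same identity in different clothes: Corollary~\ref{q1} is Rothe's identity at $z=q^{1-k}$, your verification uses it at $z=1$ (both being zeros of $(z;q)_k$). What your route buys is conceptual clarity — the recursion is manifestly Möbius inversion against the weights $1/\lv\Aut{C}\rv$, and uniqueness of the solution replaces the induction bookkeeping. What it costs is that the interchange $\sum_C\sum_B\to\sum_B\sum_C$ is over infinitely many $C$ in each stratum, so "all sums are finite" is not quite right as stated: you need absolute convergence, which does follow from Corollary~\ref{converge4} (since $\lv S(A,B)\rv\leq S_+(A,B)$) together with the positive-term evaluation in Lemma~\ref{ell2} — the same justification the paper invokes for its rearrangements, and one you should state explicitly.
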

\begin{proof}
Throughout this proof, we will rearrange terms of infinite alternating sums.
The fact that we can do so follows from \hyperref[converge4]{Corollary~\ref*{converge4}}.
To ease notation, let
\[
X_i:=\sum_{B\in\mathcal{G}(r,i)}
{\frac{S(A,B)}{\lv\Aut{B}\rv}}.
\]
As a first step toward the the result, we claim that
\[
X_i=
\begin{cases}
\displaystyle{\frac{(q)_s}{\lv\Aut{A}\rv}}&\text{if }i=s\\
&\vspace{-5px}\\
\displaystyle{-\sum_{j=s}^{i-1}{q^{i(i-j)}\cdot\binom{r-j}{r-i}_q\cdot X_j}}&\text{if }i>s.
\end{cases}
\]

If $i=s$, we use \hyperref[subg4]{Lemma~\ref*{subg4}} to obtain:
\begin{align*}
X_s
&=\sum_{j=0}^\infty{\sum_{\substack{
A\text{-chains }\mathfrak{C}\\
\max{\mathfrak{C}}\in\mathcal{G}(r,s)\\
\lv\mathfrak{C}\rv=j}}
{\lp\frac{\sub{\lp\mathfrak{C}\rp}}{\lv\Aut{\lp\max{\mathfrak{C}}\rp}\rv}\rp}}\\
&=\frac{1}{\lv\Aut{A}\rv}
-\sum_{j=1}^\infty{\sum_{\substack{
A\text{-chains }\mathfrak{C}\\
\max{\mathfrak{C}}\in\mathcal{G}(r,s)\\
\lv\mathfrak{C}\rv=j-1}}
{\sub{\lp\mathfrak{C}\rp}}
\cdot\sum_{\substack{
B\in\mathcal{G}(r,s)\\
B\neq\max{\mathfrak{C}}}}
{\lp\frac{\sub{\lp\max{\mathfrak{C}},B\rp}}{\lv\Aut{B}\rv}\rp}}\\
&=\frac{1}{\lv\Aut{A}\rv}
-\lp\frac{1}{(q)_s}-1\rp\sum_{j=1}^\infty{\sum_{\substack{
A\text{-chains }\mathfrak{C}\\
\max{\mathfrak{C}}\in\mathcal{G}(r,s)\\
\lv\mathfrak{C}\rv=j-1}}
{\frac{\sub{\lp\mathfrak{C}\rp}}{\lv\Aut{\lp\max{\mathfrak{C}}\rp}\rv}}}\\
&=\frac{1}{\lv\Aut{A}\rv}
-\lp\frac{1}{(q)_s}-1\rp X_s.
\end{align*}
The result follows by solving for $X_s$.
If, on the other hand, $i>s$, we can apply \hyperref[ell2]{Lemma~\ref*{ell2}} to see that
\begin{align*}
X_i
&=\sum_{k=1}^\infty{\sum_{\substack{
A\text{-chains }\mathfrak{C}\\
\max{\mathfrak{C}}\in\mathcal{G}(r,i)\\
\lv\mathfrak{C}\rv=k}}
{\lp\frac{\sub{\lp\mathfrak{C}\rp}}{\lv\Aut{\lp\max{\mathfrak{C}}\rp}\rv}\rp}}\\
&=-\sum_{B\in\mathcal{G}(r,i)}{\frac{\sub{\lp A,B\rp}}{\lv\Aut{B}\rv}}
-\sum_{j=s}^{i-1}{\sum_{k=2}^\infty{\sum_{\substack{
A\text{-chains }\mathfrak{C}\\
\max{\mathfrak{C}}\in\mathcal{G}(r,j)\\
\lv\mathfrak{C}\rv=k-1}}
{\sub{(\mathfrak{C})}}
\cdot\sum_{B\in\mathcal{G}(r,i)}
{\frac{\sub{\lp\max{\mathfrak{C}},B\rp}}{\lv\Aut{B}\rv}}}}\\
&\hspace{20px}-\sum_{k=2}^\infty{\sum_{\substack{
A\text{-chains }\mathfrak{C}\\
\max{\mathfrak{C}}\in\mathcal{G}(r,i)\\
\lv\mathfrak{C}\rv=k-1}}
{\sub{(\mathfrak{C})}}
\cdot\sum_{\substack{B\in\mathcal{G}(r,i)\\B\neq\max{\mathfrak{C}}}}
{\frac{\sub{\lp\max{\mathfrak{C}},B\rp}}{\lv\Aut{B}\rv}}}\\
&=-\binom{r-s}{r-i}_q
\cdot\frac{q^{i(i-s)}}{(q)_i\lv\Aut{A}\rv}
-\sum_{j=s}^{i-1}{\binom{r-j}{r-i}_q\cdot\frac{q^{i(i-j)}}{(q)_i}
\cdot\sum_{k=2}^\infty{\sum_{\substack{
A\text{-chains }\mathfrak{C}\\
\max{\mathfrak{C}}\in\mathcal{G}(r,k)\\
\lv\mathfrak{C}\rv=k-1}}
{\frac{\sub{(\mathfrak{C})}}{\lv\Aut{\lp\max{\mathfrak{C}}\rp}\rv}}}}\\
&\hspace{20px}-\lp\frac{1}{(q)_i}-1\rp
\sum_{k=2}^\infty{\sum_{\substack{
A\text{-chains }\mathfrak{C}\\
\max{\mathfrak{C}}\in\mathcal{G}(r,k)\\
\lv\mathfrak{C}\rv=k-1}}
{\frac{\sub{(\mathfrak{C})}}{\lv\Aut{\lp\max{\mathfrak{C}}\rp}\rv}}}\\
&=-\sum_{j=s}^{i-1}
{\lp\binom{r-j}{r-i}_q\cdot\frac{q^{i(i-j)}}{(q)_i}\cdot X_j\rp}
-\lp\frac{1}{(q)_i}-1\rp X_i.
\end{align*}
The claim follows by solving for $X_i$.

By the claim above, the lemma is true for $i=s$; we will now prove the lemma by induction on $i$.
Using the induction hypothesis, note that
\begin{align*}
X_i
&=-\sum_{j=s}^{i-1}{q^{i(i-j)}\cdot\binom{r-j}{r-i}_q\cdot X_j}\\
&=-\sum_{j=s}^{i-1}
{\frac{q^{i(i-j)}}{(q)_{r-i}(q)_{i-j}}
\cdot\frac{(-1)^{j-s}q^{\frac{j(j+1)}{2}}(q)_{r-s}(q)_s}
{q^{\frac{s(s+1)}{2}}(q)_{j-s}\lv\Aut{A}\rv}}\\
&=-\frac{(q)_{r-s}(q)_s}{q^{\frac{s(s+1)}{2}}(q)_{r-i}\lv\Aut{A}\rv}
\cdot\sum_{j=s}^{i-1}{\frac{(-1)^{j-s}q^{i(i-j)+\frac{j(j+1)}{2}}}
{(q)_{i-j}(q)_{j-s}}}\\
&=-\frac{(q)_{r-s}(q)_s}{q^{\frac{s(s+1)}{2}}(q)_{r-i}\lv\Aut{A}\rv}
\cdot\sum_{j=0}^{i-s-1}{\frac{(-1)^jq^{i(i-j-s)+\frac{(j+s)(j+s+1)}{2}}}
{(q)_{i-j-s}(q)_j}}.
\end{align*}
Letting $k=i-s$, and using \hyperref[q1]{Corollary~\ref*{q1}}, we see that
\begin{align*}
X_i
&=-\frac{q^{ik}(q)_{r-s}(q)_s}{(q)_{r-i}\lv\Aut{A}\rv}
\cdot\sum_{j=0}^{k-1}{(-1)^j\frac{q^{\frac{j(j+1)}{2}-jk}}{(q)_{k-j}(q)_j}}\\
&=-\frac{q^{ik}(q)_{r-s}(q)_s}{(q)_{r-i}\lv\Aut{A}\rv}
\cdot\lp-(-1)^k\frac{q^{\frac{k(k+1)}{2}-k^2}}{(q)_k}\rp.
\end{align*}
Substituting for $k$ gives the result.
\end{proof}

\bibliography{abelianqseries}
\bibliographystyle{amsalpha}

\end{document}